\documentclass[11pt]{article}

\usepackage{amsmath, amssymb, amsthm}
\usepackage{hyperref}
\usepackage{enumitem}
\usepackage[numbers]{natbib}
\usepackage[margin=1.2in]{geometry}

\title{Note on the Equivalence of Costas Polynomials and Orthomorphisms}

\author{
Amela~Muratovi\'{c}-Ribi\'{c}\thanks{Department of Mathematics and Computer Science, Faculty of Science, University of Sarajevo, Bosnia and Herzegovina. \\ Emails: amela@pmf.unsa.ba; aleksandar.bs@pmf.unsa.ba.}
\and
Aleksandar~Bala\v{s}ev-Samarski\footnotemark[1]}

\date{June 5, 2026}

\newtheorem{Them}{Theorem}[section]
\newtheorem{Prop}{Proposition}[section]

\newtheorem{Cor}{Corollary}[Them]
\newtheorem{Def}{Definition}[section]
\newtheorem*{Conj*}{Conjecture}

\newcommand{\keywords}[1]{\par\noindent
  \textbf{Keywords: } #1}

\newcommand{\msc}[1]{\par\noindent
  \textbf{MSC 2020: } #1}

\begin{document}

\maketitle

\begin{abstract}
    We establish an equivalence between the existence of Costas polynomials and the existence of a special kind of orthomorphism such that their compositions are also orthomorphisms. Computations are easier over these orthomorphisms. We provide a lower bound for the number of Costas polynomials and derive some of their properties. We show that Costas polynomials, by virtue of being multiplicative analogs of planar polynomials, can also be used to construct complete families of mutually orthogonal Latin squares.
\end{abstract}

\keywords{
    Costas polynomial; permutation polynomial; Latin square; orthomorphism
}

\msc{11T06; 12E20; 05B15}

\section{Introduction}

\par Let $p$ be a prime, and $q=p^n$ where $n$ is a positive integer. Denote by $\mathbb{F}_q$ the finite field of order $q$. Every mapping $\theta$ on $\mathbb{F}_q$ induces a polynomial $f(x)\in \mathbb{F}_q[x]$ of the degree  less than $q$ such that $\theta(s)=f(s)$ for all $s\in \mathbb{F}_q[x]$.  If this mapping is a permutation of $\mathbb{F}_q$, then we say that $f(x)$ is \emph{ a permutation polynomial} (PP). Polynomials of the form $L(x)=\sum_{i=0}^{n-1}a_ix^{p^i}$ are called \emph{linearized permutation polynomials} since these are linear operators of $\mathbb{F}_q$ over $\mathbb{F}_p$ (see \citep{Lidl}).
\par A Costas array of order $n$ is a $n \times n$ permutation array (with exactly one dot in every row and column and blanks elsewhere) such that every vector connecting two dots is distinct. The Costas property ensures that the array has ideal auto-correlation, which makes Costas arrays highly desirable for use in RADAR and SONAR communications. 
 A special case is the circular Costas sequence, which motivated the definition of Costas polynomials. For more information, see \citep{[2],[3],[9], David,[1]}. 

\begin{Def}
Suppose $f \in \mathbb{F}_q[x] $ is such that $f(0)=0$ and $f(dx)-f(x)$ is a permutation polynomial for all $d \in \mathbb{F}_q$, $d \neq 1$. Then $f$ is called a Costas polynomial.
\end{Def}
Without lost of generality we can assume $f(0)=0$.
According to an existing conjecture (see \citep{[9]}) the only circular Costas sequence is exponential Welch, or, equivalently, the only Costas polynomial over the prime field  is of the form $x^s$, with $\gcd(s,q-1)=1$. 
Known classes of Costas polynomials include $f(x)=x^s$ where $\gcd(q-1,s)=1$ for any field. In the case where $\mathbb{F}_{q^m}$ is an extension field and $L(x)=\sum_{i=0}^n a_i x^{q^i}$ is a linearized permutation polynomial of $\mathbb{F}_{q^m}$ over $\mathbb{F}_q$, then $L(x^s)$ is also a Costas polynomial for all $s$ such that $\gcd(s,q^m-1)=1$. It is conjectured in \citep{David} that these are the only Costas polynomials.

\par A \emph{Latin square} of order $n$ on a set $S$, where $|S|=n$,  is a table (matrix) of dimension $n \times n$ with elements from $S$ such that in each row and each column the elements are distinct. Usually $S$ is a finite group with $n$ elements and $n$ rows, and columns are labeled by elements of the group. Latin squares are widely used to make schedules, in design theory, etc. (see \citep{Evans}) . The element in the $i$-th row and the $j$-th column of the Latin square $\mathcal{L}$ is denoted by $\mathcal{L}_{i,j}$. Two Latin squares are said to be equivalent if the second Latin square can be obtained from the first by permuting the rows, columns, and elements. Two Latin squares $\mathcal{L}$ and $\mathcal{K}$ of the same order $n$ are said to be \emph{mutually orthogonal} if the ordered pairs $(\mathcal{L}_{i,j}, \mathcal{K}_{i,j})$ are distinct for all $1\leq i,j\leq n $. The maximum number of pairwise mutually orthogonal Latin squares of order $n$ is $n-1$, and they can be constructed over finite fields of order $n=q$ by $\mathcal{L}^a_{i,j}=i+aj$ where $i,j \in \mathbb{F}_q$, $a\in \mathbb{F}_q^*$. Let $f \colon G\rightarrow G$. By Mann (\citep{Mann}), the Latin squares $\mathcal{L}_{i,j}=i+j$ and $\mathcal{K}_{i.j}=i+f(j)$, $i,j\in G$ are mutually orthogonal if both $f(x)$ and $f(x)-x$ are permutations of the group $G$. 
\begin{Def} Let $G$ be a finite group, and $\theta \colon G\rightarrow G$. We say that $\theta $ is an \emph{orthomorpism} if both $\theta(x)$ and $\theta(x)-x$ are permutations on $G$.
\end{Def}
We say that $f(x)$ is an orthomorphism in a finite field $\mathbb{F}_q$ if it is an orthomorphism in its additive group $(\mathbb{F}_q,+)$.

\section{Properties of Orthomorphisms Derived from Costas Polynomials}

 Let $f(x)$ be a Costas polynomial over $\mathbb{F}_q$, i.e.\ $f(0)=0$ and $f(dx)-f(x)$ are permutation polynomials (PP) for all $d \in \mathbb{F}_q \setminus \{1\}$. In particular, if $d=0$ then it follows that $f(x)$ is a PP. Using the substitution $x=f^{-1}(y)$ in the expression $f(dx)-f(x)$, we obtain that $f(df^{-1}(y))-y$ is a PP for all $d \in \mathbb{F}_q \setminus \{1\}$. 

Let $d \in \mathbb{F}_q^*$ and denote $g_d(x)=f(df^{-1}(x))$. As a composition of PPs, $g_d(x)$ is again a PP. Therefore, both $g_d(x)$ and $g_d(x)-x$ are permutations of $\mathbb{F}_q$ and $g_d(x)$ is an orthomorphism for $d\in \mathbb{F}_q\setminus \{1\}$.  Denote by $g^n(x)=g \circ g \circ \cdots \circ g(x)$ the $n$-fold composition of $g(x)$ with itself. Then
$
g_d^2(x)=f\big(d f^{-1}(f(d f^{-1}(x)))\big)=f(d^2 f^{-1}(x))=g_{d^2}(x),
$
and by induction it follows that $g_d^n(x)=g_{d^n}(x)$ is also an orthomorphism.

\begin{Them}
There exists a Costas polynomial $f(x)$ over $\mathbb{F}_q$ if and only if there exists an orthomorphism $g(x)$ such that $g(0)=0$ and all of its compositions $g^k(x)$ are also orthomorphisms for $k=1,2,\ldots,q-2$. They are related through the relation $g(x)=f(\alpha f^{-1}(x))$, where $\alpha $ is a primitive element in $\mathbb{F}_q$.

\par Moreover, all Costas polynomials are of the form $L(x^s)$ where $L(x)$ is a linearized permutation and $\gcd(s,q-1)=1$ if and only if the only orthomorphisms such that $g(0)=0$ and all its compositions $g^k(x)$, $k=1,2,\ldots,q-2$, are orthomorphisms of the form $g(x)=L(\alpha L^{-1}(x))$ where $L(x)$ is a linearized permutation and $\alpha $ is a primitive element in $\mathbb{F}_q$. 
\end{Them}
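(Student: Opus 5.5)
The forward direction is essentially done in the discussion preceding the statement. Given a Costas polynomial $f$, I take $g=g_\alpha=f(\alpha f^{-1}(x))$ with $\alpha$ primitive. Then $g(0)=f(0)=0$, and $g^k=g_{\alpha^k}$. For $k=1,\dots,q-2$ we have $\alpha^k\neq 1$ and $\alpha^k\neq 0$, so each $g^k$ is an orthomorphism by the computation above.

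For the converse, I start from an orthomorphism $g$ with $g(0)=0$ such that $g,g^2,\dots,g^{q-2}$ are all orthomorphisms, and I build $f$ explicitly. The plan is to show that $g$ restricted to $\mathbb{F}_q^*$ is a single $(q-1)$-cycle. First, $g$ fixes $0$, so it permutes $\mathbb{F}_q^*$. If some $x\neq 0$ satisfied $g^k(x)=x$ with $1\le k\le q-2$, then $g^k(x)-x$ would vanish at both $0$ and $x$, so it could not be a permutation. Hence every orbit of $g$ on $\mathbb{F}_q^*$ has length at least $q-1$, which forces $g$ to be one cycle of length $q-1$ there. Now fix $x_0\neq 0$ and define $f(0)=0$ and $f(\alpha^k)=g^k(x_0)$. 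This $f$ is a bijection with $f(\alpha y)=g(f(y))$, so $g=f(\alpha f^{-1}(x))$; the relation also holds at $0$.

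Next I check that $f$ is Costas. Take $d\neq 1$. If $d=0$, then $f(0)-f(x)=-f(x)$ is a permutation. If $d=\alpha^k$ with $1\le k\le q-2$, substitute $x=f^{-1}(y)$: then $f(dx)-f(x)=g^k(y)-y$, which is a permutation by hypothesis. Finally I take $f$ to be the polynomial of degree less than $q$ interpolating this map.

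For the "moreover" part, I use the same correspondence in both directions. If $f=L(x^s)$, then $f^{-1}(x)=(L^{-1}(x))^{t}$ with $st\equiv 1 \pmod{q-1}$. This gives $g(x)=L(\alpha^s L^{-1}(x))$, and $\alpha^s$ is again primitive. Conversely, if $g=L(\beta L^{-1}(x))$ with $\beta$ primitive, write $\alpha=\beta^{t}$ with $\gcd(t,q-1)=1$. Then $L(x^s)$, with $s\equiv t^{-1}$, corresponds to $g$ via the primitive element $\alpha$.

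The main obstacle is that a given $g$ determines $f$ only up to the choice of the starting point $x_0$, and the primitive element is also a free choice. So I must show that every $f$ attached to a $g$ of the linearized form is itself of the form $L(x^s)$. Changing the base point replaces $f(y)$ by $f(cy)$ for some $c\neq 0$, and $L(cx^s)=L'(x^s)$ with $L'(x)=L(cx)$, so the class of such $f$ is stable under this change. Changing $\alpha$ to $\alpha^t$ corresponds to replacing $f(y)$ by $f(y^{t})$, which preserves the $L(x^s)$ form since exponents compose. With these two normalizations the stated biconditional follows.
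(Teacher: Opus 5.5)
Your proposal is correct. For the first equivalence it is the paper's argument. The paper shows that $g$ has the single fixed point $0$ and one $(q-1)$-cycle, then invokes conjugacy with $h(x)=\alpha x$. You write the conjugator out as $f(\alpha^k)=g^k(x_0)$, and you also check $f(0)=0$ and the case $d=0$, which the paper leaves implicit.

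The route genuinely differs in the ``moreover'' part. The paper takes an arbitrary Costas $f$ with $f(\alpha f^{-1}(x))=L(\alpha^s L^{-1}(x))$, puts $F=L^{-1}\circ f$, and reads off from $F(\alpha y)=\alpha^s F(y)$, by comparing coefficients, that $F$ is a monomial. Strictly, $F(y)=cy^s$; the paper drops $c$, harmlessly, since it can be absorbed into $L$. You instead exhibit one realization $L(x^s)$ of $g$. You then describe all $f$ realizing $g$ through base-point and primitive-element changes, and observe that the class of maps $L(x^s)$ is stable under both. Your version is more combinatorial and produces the scalar $c$ naturally; the paper's is shorter and needs no auxiliary realization.

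Two small repairs are needed. First, state explicitly why your description of all such $f$ is exhaustive: any $f$ with $f(0)=0$ and $g=f(\alpha f^{-1}(x))$ satisfies $f(\alpha^k)=g^k(f(1))$, so it is your construction with $x_0=f(1)$. Second, replacing $\alpha$ by $\alpha^t$ turns $f(y)$ into $f(y^{t'})$ with $tt'\equiv 1\pmod{q-1}$, not into $f(y^t)$. This does not affect the conclusion.

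In fact the primitive-element normalization is not needed at all. Your ``conversely'' step realizes $g$ by $L(x^s)$ through the same $\alpha$ used to build $g$ from $f$. So $f(y)=L((cy)^s)$ for some $c\neq 0$ follows directly.
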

\begin{proof}
Assume $f(x)$ is a Costas polynomial and define $g(x)=g_\alpha(x)=f(\alpha f^{-1}(x))$ where $\alpha$ is a primitive element in $\mathbb{F}_q$. From $f(0)=0$, it follows $g(0)=0$. Then we have $g^k(x)=g_{\alpha^k}(x)=f(\alpha^k f^{-1}(x))$ and $g^k(x)=g_{\alpha^k}(x)-x$  are all PPs for $1 \leq k \leq q-2$. Therefore, the compositions $g^k(x)$ are orthomorphisms for all $1 \leq k \leq q-2$.

\par Conversely, assume now that there exists an orthomorphism $g(x)$ such that $g(0)=0$ and all $g^k(x)$, $1 \leq k \leq q-2$, are orthomorphisms. Every orthomorphism $\varphi$ has exactly one fixed point (since $\varphi (x)-x$ is a PP), thus from $g(0)=0$ follows that $0$ is the only fixed point of the permutation $g(x)$. If $x_0 \neq 0$ lies in a cycle of length $t<q-1$ in the permutation $g(x)$ of $\mathbb{F}_q$, then $x_0=g^t(x_0)$, which implies $g^t(x_0)-x_0=0=g^t(0)-0$, contradicting the fact that $g^t(x)-x$ is a PP. Therefore, the permutation $g(x)$ has one fixed element $0$ and one cycle of length $q-1$. Let $\alpha$ be a primitive element of $\mathbb{F}_q$. Then the PP $h(x)=\alpha x$ has the same cycle structure. Thus, $g(x)$ and $h(x)$ are conjugate; i.e.\ there exists a permutation $f(x)$ such that $g=f \circ h \circ f^{-1}$,  or equivalently $g(x)=f(\alpha f^{-1}(x))$. Using $g^k(x)=f(\alpha^k f^{-1}(x))$, $1 \leq k \leq q-2$, we see that $f(\alpha^k f^{-1}(x))-x$ are permutations. Substituting $y=f^{-1}(x)$, we obtain that $f(\alpha^k y)-f(y)$ are all permutations for $1\leq k\leq q-2$. Therefore, $f(dx)-f(x)$ are permutations for $d \neq 1$; i.e., $f(x)$ is a Costas polynomial.
\par 
Assume now the Costas polynomial $f(x)$ is of the form $f(x)=L(x^s)$. Then $g_\alpha (x)=f(\alpha f^{-1}(x))=L(\alpha ^s (L^{-1}(x)))$, which is a linearized polynomial.
Conversely, assume that $g_\alpha (x)=f(\alpha f^{-1}(x))$ is of the form $L(\beta L^{-1}(x))$ where $\beta =\alpha^s, \gcd(s,q-1)=1$, $TODO: \beta$ is a primitive element of $\mathbb{F}_q$, i.e.\ $L(\beta L^{-1}(x))=f(\alpha f^{-1}(x))$. Then $\alpha^s(L^{-1}\circ f)(y)=(L^{-1}\circ f)(\alpha y)$. Comparing the coefficients of the polynomials on both sides we conclude that $L^{-1}\circ f$ is a monomial, more precisely $L^{-1}\circ f (y)=y^s$. Thus $f(y)=L(y^s)$.
\end{proof}
\begin{Cor} The orthomorphism $g(x)$ has one fixed point $0$ and one cycle of length $q-1$.
\end{Cor}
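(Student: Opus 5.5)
The corollary concerns an orthomorphism $g(x)$ with $g(0)=0$ all of whose compositions $g^k(x)$, $1\le k\le q-2$, are orthomorphisms. I would argue directly from the cycle decomposition of the permutation $g$, essentially isolating the counting step already used in the converse direction of the preceding theorem.

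\emph{Step 1: the fixed point.} Because $g(x)-x$ is a permutation of $\mathbb{F}_q$, the value $0$ is attained exactly once. So $g$ has exactly one fixed point, and since $g(0)=0$ this point is $0$.

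\emph{Step 2: no short cycles.} Take $x_0\neq 0$ and let $t$ be the length of its cycle under $g$. Then $1\le t\le q-1$, because $0$ lies in its own cycle, and Step 1 gives $t\ge 2$. Suppose $t\le q-2$. Then $g^t$ is an orthomorphism by hypothesis. But $g^t(x_0)-x_0=0=g^t(0)-0$ with $x_0\neq 0$, so $g^t(x)-x$ is not injective. This contradiction forces $t=q-1$.

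\emph{Step 3: conclusion.} Every nonzero element therefore lies in a cycle of length $q-1$, and there are exactly $q-1$ nonzero elements, so there is a single such cycle. Hence $g$ decomposes as the fixed point $0$ together with one $(q-1)$-cycle. Alternatively, one can invoke the theorem directly: $g=f\circ h\circ f^{-1}$ with $h(x)=\alpha x$ and $\alpha$ primitive, and conjugation preserves cycle type, which gives the same conclusion.

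The only delicate point is making sure the index $t$ falls in the range $1\le k\le q-2$ where the hypothesis applies. That is exactly why $t=q-1$ cannot be excluded, and also why it does not need to be: $g^{q-1}$ is the identity, not an orthomorphism.
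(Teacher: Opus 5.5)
Your argument is correct and is essentially the paper's: the corollary is read off from the converse direction of the preceding theorem's proof, which uses exactly your two steps. First, $0$ is the unique fixed point because $g(x)-x$ is a permutation; second, a cycle of length $t<q-1$ through $x_0\neq 0$ is ruled out via $g^t(x_0)-x_0=0=g^t(0)-0$, with your explicit check that $2\le t\le q-2$ falls in the range where the hypothesis applies being a small bit of care the paper leaves implicit.
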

We now show that the orthomorphism $g(x)$ corresponds to a Costas polynomial if and only if $g^k(x)$ corresponds to a Costas polynomial for all $k$ such that $\gcd(k,q-1)=1$.
\begin{Cor} Let $g(x)$ be such that $g(0)=0$ and has one cycle of length $q-1$. It then follows that all the compositions $g^s(x), 1\leq s\leq q-2$, are orthomorphisms if and only if all the $s$-folded  compositions $g^{ks}(x)$ of $g^k(x)$ with $1\leq s\leq q-2$ are orthomorphisms  for all $k$ such that $\gcd(q-1,k)=1$. 
In particular, $g^{-1}(x)$ is an orthomorphism, $g(x)$ is such that $g(0)=0$, it has one cycle of the length $q-1$ and all of its compositions $g^s(x), 1\leq s\leq q-2$ are orthomorphisms.
\end{Cor}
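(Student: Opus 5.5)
The plan is to reduce everything to the exponents of powers of $g$. Since $g(0)=0$ and $g$ has one cycle of length $q-1$ on $\mathbb{F}_q^*$, the permutation $g$ has order exactly $q-1$ in the symmetric group on $\mathbb{F}_q$. Hence $g^{m}$ depends only on $m \bmod (q-1)$. Moreover, $g^m$ is the identity precisely when $(q-1)\mid m$. The identity is not an orthomorphism, since $x-x=0$ is not a permutation. So the condition ``$g^s$ is an orthomorphism for $1\le s\le q-2$'' is the same as ``$g^m$ is an orthomorphism for every $m\not\equiv 0 \pmod{q-1}$''.

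Fix $k$ with $\gcd(k,q-1)=1$. The compositions of $h=g^k$ are $h^s=g^{ks}$ for $1\le s\le q-2$. Since $k$ is invertible modulo $q-1$, the map $s\mapsto ks \bmod (q-1)$ permutes the nonzero residues $\{1,\dots,q-2\}$. Therefore the sets $\{g^{ks}:1\le s\le q-2\}$ and $\{g^{s}:1\le s\le q-2\}$ coincide as sets of permutations.

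This gives both directions at once. If all $g^s$ are orthomorphisms, then all $g^{ks}$ are, for every such $k$. Conversely, taking $k=1$ in the hypothesis recovers the original condition.

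I would also record that $h=g^k$ itself satisfies the hypotheses of the previous theorem:
\begin{itemize}[label={}]
\item $h(0)=0$;
\item $h$ is again a single $(q-1)$-cycle on $\mathbb{F}_q^*$, because a power $c^k$ of a $(q-1)$-cycle $c$ is a single cycle exactly when $\gcd(k,q-1)=1$.
\end{itemize}
Then $h$ corresponds to a Costas polynomial by the first part of the theorem, which is the interpretation announced before the corollary.

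For the ``in particular'' statement, take $k=q-2\equiv -1$, which is coprime to $q-1$. Then $g^{-1}=g^{q-2}$ is one of the listed compositions, so it is an orthomorphism. By the above, $g^{-1}$ fixes $0$, is a single $(q-1)$-cycle, and all its powers $g^{-s}=g^{(q-1)-s}$ for $1\le s\le q-2$ are orthomorphisms.

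The only real subtlety is justifying that $g$ has order exactly $q-1$, so that exponents may be reduced modulo $q-1$. This follows directly from the cycle structure hypothesis, or from the previous corollary in the Costas case.
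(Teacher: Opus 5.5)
Your proof is correct and follows the paper's argument: both use $g^{q-1}=\mathrm{id}$ to reduce $g^{ks}$ to $g^{ks \bmod (q-1)}$, note that $g^k$ keeps the cycle structure when $\gcd(k,q-1)=1$, and take $k=q-2$ for the claim about $g^{-1}$. Your explicit observation that $s\mapsto ks \bmod (q-1)$ permutes $\{1,\dots,q-2\}$ spells out the step the paper leaves implicit.
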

\begin{proof} From $\gcd(q-1,k)=1$ if follows that the cycle structures of $g(x)$ and $g^k(x)$ are the same. Moreover, since the $g^s(x)$ are all orthomorphisms (or all are non-orthomorphisms) for $s=1,2,\ldots ,q-2$ and $g^{q-1}(x)=\mathrm{id}(x)=x$, it follows that $(g^k)^s(x)=g^{ks}(x)=g^{ks\pmod{ q-1}}(x)$ is an orthoomorphism (or is a non-orthomorphisms) for all $s=1,2,\ldots ,q-2$, and conversely. Note that $g^{-1}(x)=g^{q-2}(x)$ As $\gcd(q-1,q-2)=1$ it follows that $g^{-1}(x)$ is an orthomorphism of the given property.\end{proof}

Using the fact that $L(x^s)$ is Costas, we obtain:

\begin{Cor}
Let $L(x)$ be a linearized polynomial over $\mathbb{F}_q$, and let $d$ be a primitive element. Then $g(x)=L(dL^{-1}(x))$ satisfies $g(0)=0$. It has one cycle of length $q-1$, and all of its compositions $g^k(x)$, for $1\leq k\leq q-2$, are orthomorphims. Moreover, $g^{q-1}(x)=x$.
\end{Cor}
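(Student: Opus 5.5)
The plan is to derive the corollary directly from the first part of the Theorem, applied to the Costas polynomial $f(x)=L(x)$. This is the case $s=1$ of the known family $L(x^s)$, so we take $L$ to be a linearized \emph{permutation} polynomial, as in the Introduction; otherwise $L^{-1}$ is undefined. First I will check directly that $L$ is Costas. We have $L(0)=0$, and by $\mathbb{F}_p$-linearity
\[
L(dx)-L(x)=L(dx-x)=L((d-1)x).
\]
For $d\neq 1$ this is the composition of the bijection $x\mapsto (d-1)x$ with the permutation $L$, so it is a PP. Hence $L$ is a Costas polynomial. (One could instead simply cite the stated fact that $L(x^s)$ is Costas with $s=1$.)

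Next I will apply the forward direction of the Theorem with $f=L$ and the primitive element $\alpha=d$. Put $g(x)=g_d(x)=L(dL^{-1}(x))$. Since $L^{-1}(0)=0$, we get $g(0)=L(0)=0$. By the composition identity established before the Theorem, $g^k(x)=g_{d^k}(x)=L(d^kL^{-1}(x))$. For $1\le k\le q-2$ we have $d^k\neq 1$ because $d$ is primitive. So $g^k$ is a composition of PPs, and
\[
g^k(x)-x=L(d^kL^{-1}(x))-L(L^{-1}(x))=L\bigl((d^k-1)L^{-1}(x)\bigr)
\]
is a PP as well. Therefore every $g^k$ with $1\le k\le q-2$ is an orthomorphism.

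The cycle structure then follows from the first Corollary, or directly. Each $g^k$ with $1\le k\le q-2$ is an orthomorphism, so its only fixed point is $0$. Hence no nonzero element has period less than $q-1$, and $g$ consists of the fixed point $0$ together with one cycle of length $q-1$. Equivalently, $g=L\circ h\circ L^{-1}$ with $h(x)=dx$, so $g$ is conjugate to $h$, which has exactly this structure. Finally, $g^{q-1}(x)=L(d^{q-1}L^{-1}(x))=L(L^{-1}(x))=x$.

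The only real obstacle is the bijectivity of $L$, which must be part of the hypothesis; everything else is immediate from linearity and from the Theorem.
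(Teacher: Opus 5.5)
Your proposal is correct and follows the paper's route: the paper obtains this corollary by applying the forward direction of the Theorem, together with the preceding cycle-structure corollary, to the Costas polynomial $L(x^s)$. You take $s=1$ and verify the Costas property and $g^k(x)-x=L\bigl((d^k-1)L^{-1}(x)\bigr)$ directly, and you are right that $L$ must be a linearized \emph{permutation} polynomial for $L^{-1}$ to exist, which is the hypothesis the paper tacitly intends.
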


\begin{Them}\label{thm:22} The lower bound for the number of Costas polynomials over the field $F_q$, where $q=p^n$, is 
$$\varphi(q-1)\frac{(q-1)(q-p)\ldots (q-p^{n-1})}{n(q-1)}.$$
\end{Them}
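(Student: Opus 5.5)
The plan is to exhibit enough explicit Costas polynomials using the known family $L(x^s)$, where $L$ is a linearized permutation polynomial of $\mathbb{F}_q$ over $\mathbb{F}_p$ and $\gcd(s,q-1)=1$. The introduction states that this family is Costas, and the last corollary above uses the same fact. The argument will then reduce to counting how many distinct polynomial functions this family produces.

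First, I would count the parameters. Linearized permutation polynomials $L(x)=\sum_{i=0}^{n-1}a_ix^{p^i}$ correspond bijectively to $\mathbb{F}_p$-linear automorphisms of $\mathbb{F}_q\cong\mathbb{F}_p^n$. There are therefore $|GL_n(\mathbb{F}_p)|=(q-1)(q-p)\cdots(q-p^{n-1})$ of them. The admissible exponents $s$ modulo $q-1$ number $\varphi(q-1)$. This gives $\varphi(q-1)\,|GL_n(\mathbb{F}_p)|$ pairs $(L,s)$. Each pair yields the permutation $x\mapsto L(x^s)$, which vanishes at $0$; we regard it as its reduced polynomial of degree $<q$.

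Second, I would determine the fibres of the map $(L,s)\mapsto L(x^s)$. Suppose $L(x^s)=M(x^t)$ as functions. Put $N=M^{-1}\circ L$, which is again a linearized permutation. Substituting $y=x^s$ (a bijection) gives $N(y)=y^{u}$ with $u\equiv t s^{-1}\pmod{q-1}$. So the monomial permutation $y^u$ would have to be additive. Comparing it with the form $\sum a_i y^{p^i}$ of degree $<q$ forces $u\equiv p^i \pmod{q-1}$ for some $0\le i\le n-1$, and hence $N(y)=y^{p^i}$. Conversely, each $i$ gives a coincidence $L(x^s)=(L\circ y^{p^{-i}})(x^{p^i s})$. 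Each fibre therefore has exactly $n$ elements, since the exponents $p^i s$ are distinct mod $q-1$ for $0\le i<n$. This yields exactly $\varphi(q-1)\,|GL_n(\mathbb{F}_p)|/n$ distinct Costas polynomials of this shape.

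Finally, the stated bound carries an extra factor $1/(q-1)$, so it is implied by the count above. I would remark that this factor is the natural normalization. Multiplying by a nonzero constant $c$ preserves the Costas property and stays inside the family, since $cL$ is linearized. Counting normalized Costas polynomials, say those with $f(1)=1$, divides the count by exactly $q-1$, because the scalars act freely. Either reading gives the stated lower bound. The main obstacle I expect is the fibre computation: one must rigorously justify that a monomial $y^u$ agreeing with a linearized polynomial as a function on $\mathbb{F}_q$ must have $u\equiv p^i$. I would do this by uniqueness of the reduced polynomial of degree $<q$: reduce $y^u$ with $1\le u\le q-1$, compare supports, and treat the edge case $u=q-1$ separately, where $y^{q-1}$ is not a permutation.
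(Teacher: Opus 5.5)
Your proof is correct, but it takes a different route from the paper's. The paper never counts the polynomials $L(x^s)$ directly. It counts the orthomorphisms $g(x)=L(\alpha L^{-1}(x))$ with $\alpha$ primitive. It notes that $L(\alpha L^{-1}(x))=L_1(\beta L_1^{-1}(x))$ means $(L_1^{-1}\circ L)(\alpha x)=\beta\,(L_1^{-1}\circ L)(x)$, which forces $L_1^{-1}\circ L=ax^{p^j}$ (compare coefficients; the conjugates $\alpha^{p^i}$ are distinct). So each orthomorphism comes from exactly $n(q-1)$ pairs $(L,\alpha)$, and the paper then passes to Costas polynomials because each such orthomorphism is produced by one.

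Your fibre computation instead shows that $(L,s)\mapsto L(x^s)$ is exactly $n$-to-one. This gives the exact size $\varphi(q-1)\,|GL_n(\mathbb{F}_p)|/n$ of the family, which is $q-1$ times the stated bound, so the theorem follows a fortiori.

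The two counts fit together. For a fixed primitive $\alpha_0$, the map $f\mapsto f(\alpha_0 f^{-1}(x))$ sends Costas polynomials onto these orthomorphisms: $L(x^s)$ goes to $L(\alpha_0^sL^{-1}(x))$. Its fibres are $\{f(cx): c\in\mathbb{F}_q^*\}$, of size $q-1$. So the paper's number is really the number of orthomorphisms, i.e.\ of Costas polynomials $L(x^s)$ up to $f(x)\mapsto f(cx)$. This is the precomposition analogue of your normalization $f(1)=1$. It also means the paper's parenthetical ``and thus Costas polynomials of this form'', and its remark that counting the $L(x^s)$ gives the same result, are off by this factor; the bound itself remains valid.

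The paper's route buys the connection with its computer search, whose table enumerates orthomorphisms and matches this figure. Yours is self-contained, needs neither the orthomorphism correspondence nor the surjectivity argument just described, and yields the sharper bound.

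Finally, the edge case $u=q-1$ you plan to treat never arises for $q>2$, because $u\equiv ts^{-1}$ is a unit modulo $q-1$.
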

\begin{proof} All orthomorphims of the form $L(\alpha L^{-1}(x))$, where $\alpha $ is a primitive element in $\mathbb{F}_q$ and $L(x)$ is a linearized permutation polynomial, produce a Costas polynomial. There are $\varphi(q-1)$ primitive elements in $\mathbb{F}_q$, and there are $(q-1)(q-p)\ldots (q-p^{n-1})$ linearized permutation polynomials over $\mathbb{F}_q$. However,
$L(\alpha L^{-1}(x))=L_1(\beta L_1^{-1}(x))$ is equivalent to $(L_1^{-1}\circ L)(\alpha x)=\beta (L_1^{-1}\circ L)(x)$. Therefore, $L_1^{-1}\circ L(x)=ax^{p^j}$ for some $a\in \mathbb{F}_q^*$ and $j\in \{0,1,\ldots ,n-1\}$. This implies that $(q-1)n$ choices of $L(x)$ and $\alpha $ yield the same orthomorphism. Consequently, the number of distinct orthomorphisms (and thus Costas polynomials of this form) is
$\varphi(q-1)\frac{(q-1)(q-p)\ldots (q-p^{n-1})}{n(q-1)}$,
where $\varphi(n)$ denotes the Euler totient function. 
\end{proof}
Note that the same result can also be obtained by counting the $L(x^s)$.
We conclude from the above that the conjecture proposed in \citep{David}, namely that the only Costas polynomials are those of the form $L(x^s)$, $\gcd(q-1,s)=1$ is equivalent to the following conjecture:
\par
\begin{Conj*}
The only mappings $g(x)$ with fixed point $0$ and a cycle of length $q-1$, and such that all of their compositions $g^k(x)$, $1\leq k\leq q-2$ are orthomorphisms, are those of the form $L(\alpha L^{-1}(x))$ where $L(x)$ is a linearized permutation polynomial and $\alpha $ is a primitive element of $\mathbb{F}_q$. 
\end{Conj*}
\par  
Computationally, it is easier to find all orthomorphisms $g$, with $g(0)=0$ and a single cycle of length $q-1$, such that all their compositions are orthomorphisms, due to the specific cycle structure of $g(x)$, than to investigate which permutation polynomials are Costas. Note that there are $(q-1)!$ permutation polynomials with fixed point $0$, and one must store the full table of preimages and images to test whether a permutation is Costas. On the other hand, there are $(q-2)!$ permutations $g$ with $g(0)=0$ and having one cycle of length $q-1$, and it suffices to store only the cycle. Using the fact that $g^k(x), \gcd(k,q-1)=1$ preserves the property of being (or not being) an orthomorphism of the specified type, we need to investigate $(q-2)!/\varphi(q-1)\qquad $ $q-1$-cycles to determine whether all of their compositions are orthomorphisms. 
Assume that for a subarray in the $q-1$-cycle $(\ldots ,c_i,c_{i+1}\ldots c_{j+t},\ldots )$ we establish that $c_{i+t}-c_i=c_{j+t}-c_j, i\neq j$, i.e.\ that it is not an orthomorphism, then all permutations with this subarray in its $q-1$-cycle are also non-orthomorphisms, which significantly reduces the number of computations.
%\bigskip

\section{Counting number of permutations in \texttt{C++}}

% TODO: vratio sam prvi dio , jer je visio dio teksta
As stated earlier, we want to find all the permutations $g$ of the field $\mathbb{F}_q$ with the property $g(0) = 0$ and $g$ is of the form $g = (c_0 \ c_1 \ \ldots \ c_{q-2})$ with $c_i \neq 0$ for $i \in \lbrace 0, \ldots, q-2 \rbrace$, (i.e.\ $g$ is a cycle of length $q-1$ which fixes the element $0$) such that $g(x) - x$ is a permutation.
To produce all permutations $g=(c_0 \ c_1 \ \ldots \ c_{q-2})$, $g(0)=0$, with the desired property, it is sufficient to generate all cycles $(c_0 \ c_1 \ldots \ c_{q-2})$, and to verify if $g(x) - x$ is injective, or equivalently, to check if $c_{(k+j) \mod (q-1)} - c_{k}$  are all distinct, for each $k \in \lbrace 0, \ldots, q-2 \rbrace$, and for each iteration $j \in \lbrace 1, \ldots, q-2 \rbrace$.
Note that we allow here for $\mathbb{F}_q$ to be any finite field, i.e.\ $q = p^n$ for some prime $p$ and natural $n$, and use natural identification between elements of $\mathbb{F}_q$ and elements of set $\lbrace 0, \ldots, q-1 \rbrace$ and identification $\mathbb{F}_{p^n}=\mathbb{F}_q^n$.
\par
Obviously, we can (and will) assume $c_0 = 1$. 
\par
The naive algorithm for finding all such permutations would be to generate all permutations $g$ of the cycle $(1 \ 2 \ \ldots \ q-1)$, and to check whether the permutation $g^j - \mathrm{id}$ is injective for all $j \in \lbrace 1, \ldots, q-2 \rbrace$.
In that way, we obtain the complexity $(q-2)! (q-2)(q-1)$. Caution should be exercised for last two terms, since we do not know how many 
possible $(q-2)(q-1)$ checks would be performed for each permutation.
\par
The first optimization is to compare subarrays as mentioned above.
\par
The second optimization is more inclined towards $\texttt{C++}$. We use bitwise 'or' and bitwise shift operators for checking the injectivity part, and to keep information on whether the permutation so far is injective (for fixed $j$), one machine word (32 bits on most modern computers) is enough to keep the information about the (partial) set of image values -- instead of $q$ bytes or $q$ words.
Since a rollback operation -- undoing data of injectivity -- can happen and happens in each level of $1$ to $q-2$, every improvement that can be done in a partial step (verifying injectivity; rolling back) of scale $c$ factors to up $c^{q-2}$.
\par
In terms of generalization, we note that our algorithm can find the permutations for \emph{any} field $\mathbb{F}_q$, since the only specific part regarding the underlying field is the (pregenerated) table of subtraction. 
This has to be done only once for a target field $\mathbb{F}_q$.
As a side note, our benchmarks showed that certain (but small) speed improvements have already been achieved introducing the subtraction operation table, so it is already a welcome improvement.
A number of orthomorphisms for fields of order less than $30$ are given in the table, and this matches the lower bound given in Theorem~\ref{thm:22}.

\begin{tabular}{|c|c|c|c|c|c|c|c|c|c|c|c|c|c|c|c|}
\hline
q & 3 &4&5&7&8&9&11&13&16&17&19&23&25&27&29\\
\hline 
number of $g(x)$&1&2&2&2&48&12 &4&4&2688 &8&6&10&80 &1728&12\\ 	
\hline 
\end{tabular}

Thus, for $q\leq 30$, all Costas polynomials are of the form $L(x^s)$ where $L$ is a linearized permutation polynomial and $\gcd(q-1,s)=1$.

\section{Some additional properties of Costas polynomials}

\begin{Prop} Let $f(x)$ be a Costas polynomial. Let $d\in \mathbb{F}_q\setminus \{1\}$ and let $g_d(x)=f(df^{-1}(x))$.  We let $h_d(x)=f(dx)-f(x)$. Then the following holds:
\begin{itemize} 

\item[a)] The function $h_d(x)+h_d(dx)+\cdots +h_d(d^sx)=h_{d^{s+1}}(x)$ is a permutation polynomial for all $s$ such that $d^{s+1}\neq 1$. If $d^{s+1}=1$, then $h_d(x)+h_d(dx)+\cdots +h_d(d^sx)=0$ .
\item[b)] $g_t\circ g_s=g_{ts}$.
\end{itemize}
\end{Prop}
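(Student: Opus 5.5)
Part a) is a telescoping computation. Writing out the definition,
$$h_d(d^ix)=f(d^{i+1}x)-f(d^ix),\qquad i=0,1,\ldots,s,$$
and summing over $i$ from $0$ to $s$ cancels all intermediate terms. What remains is
$$h_d(x)+h_d(dx)+\cdots+h_d(d^sx)=f(d^{s+1}x)-f(x)=h_{d^{s+1}}(x).$$
The rest of a) then comes from two cases on $d^{s+1}$:
\begin{itemize}
\item If $d^{s+1}\neq 1$, the Costas property of $f$ (the definition applied with $d^{s+1}$ in place of $d$) gives that $h_{d^{s+1}}$ is a permutation polynomial.
\item If $d^{s+1}=1$, then $h_{d^{s+1}}(x)=f(x)-f(x)=0$.
\end{itemize}
The case $d=0$ needs a quick check, since then $d^i=0$ for $i\geq 1$. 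The telescoping identity still holds formally, and $d^{s+1}=0\neq 1$, so $h_0(x)=f(0)-f(x)=-f(x)$ is a PP. This is consistent with the earlier observation that $f$ is a PP.

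For part b), I would first note that $f$ is a permutation, because $h_0=-f$ is a PP. So $f^{-1}$ exists and $g_t$ is well defined for every $t\in\mathbb{F}_q$. Composing gives
$$g_t(g_s(x))=f\bigl(t\,f^{-1}(f(s f^{-1}(x)))\bigr)=f(tsf^{-1}(x))=g_{ts}(x).$$
This is the same computation the paper already did for $g_d^2=g_{d^2}$ in Section 2, now carried out with two different parameters.

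The only point needing care is the range of the indices. I will state b) for all $t,s\in\mathbb{F}_q$, where it holds with no Costas hypothesis beyond $f$ being a permutation. The orthomorphism property of $g_{ts}$ then follows whenever $ts\neq 1$. There is no genuine obstacle here; the main thing to get right is handling the degenerate values $d=0$ and $d^{s+1}=1$ cleanly.
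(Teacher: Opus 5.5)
Your proof is correct and follows the paper's own argument: you telescope $\sum_{i=0}^{s} h_d(d^i x)$ to $f(d^{s+1}x)-f(x)$ for (a), and use the direct computation $f\bigl(t f^{-1}(f(s f^{-1}(x)))\bigr)=f(tsf^{-1}(x))$ for (b), and your checks of $d=0$ and of $f$ being a permutation are welcome extra care. One small slip is in your closing aside: $g_{ts}$ is an orthomorphism only when $ts\notin\{0,1\}$, since $g_0(x)=f(0)=0$ is constant.
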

\begin{proof}
\begin{itemize}

\item[a)] $h_d(x)+h_d(dx)+\cdots +h_d(d^sx)=(f(dx)-f(x))+(f(d^2x)-f(dx))+\cdots +(f(d^{s+1}x)-f(d^sx))=f(d^{s+1}x)-f(x)=h_{d^{s+1}}(x)$, $d^{s+1} \neq 1$. If $d^{s+1}=1$, then we have 
\begin{multline*}
    h_d(x)+h_d(dx)+\cdots +h_d(d^sx) = \\
    (f(dx)-f(x))+(f(d^2x)-f(dx))+\cdots +(f(d^{s+1}x)-f(d^sx)). 
\end{multline*}
This telescopes to $=f(d^{s+1}x)-f(x)=f(x)-f(x)=0.$
\item[b)] $g_t\circ g_s(x)=f(tf^{-1}(f(sf^{-1}(x))))=f(tsf^{-1}(x))=g_{ts}(x).$

\end{itemize}
\end{proof} 

\begin{Prop} Let $f(x)=\sum_{i=0}^{q-2}x^i$ be a Costas polynomial over a field of odd characteristic. Then $a_i\neq 0$ implies $a_{q-1-i}=0$ for all $i=1,2,\ldots ,q-2$.
\end{Prop}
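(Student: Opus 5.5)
The plan is to use the power-sum (Hermite-type) condition for permutation polynomials applied to each $h_d(x)=f(dx)-f(x)$. The coefficients in the statement are read as $f(x)=\sum_{i=0}^{q-2}a_ix^i$ with $a_0=f(0)=0$. No coefficient of $x^{q-1}$ occurs, since $f$ is a PP (take $d=0$) and so has reduced degree at most $q-2$. The basic fact is that if $P$ is a PP of $\mathbb{F}_q$, then $\sum_{x\in\mathbb{F}_q}P(x)^2=\sum_{y\in\mathbb{F}_q}y^2$. This sum is $0$ as soon as $q-1\nmid 2$, i.e.\ for $q>3$. On the other hand, $\sum_x x^m$ equals $-1$ if $m>0$ and $(q-1)\mid m$, and equals $0$ otherwise. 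So $\sum_x P(x)^2$ is minus the sum of the coefficients of $P(x)^2$ at exponents that are positive multiples of $q-1$.

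Apply this to $P=h_d$ with $d\neq 1$. We have $h_d(x)^2=\sum_{i,j\ge 1}a_ia_j(d^i-1)(d^j-1)x^{i+j}$. Since $2\le i+j\le 2q-4$, the only relevant exponent is $i+j=q-1$. Put $b_i=a_ia_{q-1-i}$, which satisfies $b_i=b_{q-1-i}$. The condition becomes $\sum_{i=1}^{q-2}b_i(d^i-1)(d^{q-1-i}-1)=0$ for every $d\in\mathbb{F}_q\setminus\{1\}$. For $d\neq 0$ we have $d^{q-1-i}=d^{-i}$, so each summand is $b_i(2-d^i-d^{-i})$. Using the symmetry of the $b_i$, we get $\sum_i b_id^{-i}=\sum_i b_{q-1-i}d^{q-1-i}=\sum_i b_id^i$. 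The identity therefore reads $2S-2\sum_{i=1}^{q-2}b_id^i=0$, where $S=\sum_i b_i$.

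In odd characteristic we may divide by $2$. Hence the polynomial $Q(d)=\sum_{i=1}^{q-2}b_id^i-S$ vanishes at every $d\in\mathbb{F}_q^*\setminus\{1\}$. It also vanishes at $d=1$, where $Q(1)=S-S=0$. Thus $Q$ has $q-1$ roots but degree at most $q-2$, so $Q$ is the zero polynomial. This forces $b_i=a_ia_{q-1-i}=0$ for all $1\le i\le q-2$, which is exactly the claim.

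The main point requiring care is the power-sum step, namely that $\sum_y y^2=0$. This fails for $q=3$, and in fact the statement itself fails there: $f(x)=x$ is Costas over $\mathbb{F}_3$ with $a_1\neq0=a_{q-1-1}$ violated. So I would state the hypothesis $q>3$ explicitly, which is exactly where oddness of the characteristic and $q\neq 3$ are used.
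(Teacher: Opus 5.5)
Your proof is correct and follows essentially the paper's route: the $x^{q-1}$-coefficient of $(f(dx)-f(x))^2$ must vanish (Hermite), you symmetrize using $b_i=b_{q-1-i}$, and you count roots of the resulting polynomial in $d$; the only difference is that you get the root $d=1$ for free, whereas the paper separately invokes $\sum_i a_ia_{q-1-i}=0$ from $f$ being a PP. Your caveat is also right and sharper than the paper, whose appeal to Hermite's criterion with exponent $2$ silently requires $2\le q-2$; indeed $f(x)=x$ over $\mathbb{F}_3$ is a counterexample to the statement as written, so the hypothesis $q>3$ must be added.
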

\begin{proof}Let $G(x)=\sum_{i=0}^{q-1}b_ix^i$. Then the coefficient of $x^{q-1}$ in $G(x)^2$ equals $\sum_{i=0}^{q-1}b_ib_{q-1-i}$. If $G(x)$ is a permutation polynomial, then by the Hermite criterion we have $\sum_{i=0}^{q-1}b_ib_{q-1-i}=0$ (see \citep{Lidl}). We apply this to Costas polynomials. If $f(x)$ is Costas, then $f(d x)-f(x)=\sum_{i=0}^{q-1}a_i(d^i-1)x^i$ is a permutation polynomial for all $d\in \mathbb{F}_q\setminus \{1\}$. 
% TODO: Da li [] ima neko posebno znacenje?
Let $G(x)=[f(d x)-f(x)]$. The coefficient with $x^{q-1}$ in $(G(x))^2=[f(d x)-f(x)]^2$ is 
$$\sum_{i=0}^{q-1}a_ia_{q-1-i}(d^i-1)(d^{q-1-i}-1).$$
Expanding yields $$\sum_{i=0}^{q-1}a_ia_{q-1-i}(d^{q-1}+1)-2\sum_{i=0}^{q-1}a_ia_{q-1-i}d^i=0,$$
$ d \neq 1$.
Viewed as a polynomial in $d $, this expression vanishes for all $d \neq 1$. 
Therefore, it must be either the zero polynomial or a scalar multiple of $(d^q-d)\cdot(d-1)^{-1}=d^{q-1}+d^{q-2}+\cdots +d$. 

Since $f(x)$ is a permutation polynomial, we also have $\sum_{i=0}^{q-1}a_ia_{q-1-i}=0$, so the first term vanishes and we obtain
$$-2\sum_{i=0}^{q-1}a_ia_{q-1-i}d^i=0.$$ This implies $a_ia_{q-1-i}=0$ for all $i$, and hence  $a_i\neq 0$ implies $a_{q-1-i}=0$. 
\end{proof}
Therefore, we conclude that Costas polynomials have at least half of their coefficients equal to zero.

In all cases of orthomorphisms with fixed point $0$ and a single cycle of length $q-1$, such that all of their compositions are also orthomorphisms, we have $g^{\frac{q-1}{2}}(x)=-x$ in fields of order $q\leq 30$. If $f=L(x^s)$ and $g(x)=L(\alpha^s L^{-1}(x))$, then $g^{\frac{q-1}{2}}(x)=L(-L^{-1}(x))=-L(L^{-1}(x))=-x$. This motivates the following proposition.
\begin{Prop} For a Costas polynomial $f(x)$, the mapping $g(x)=f(\alpha f^{-1}(x))$ has the property $g^{\frac{q-1}{2}}(x)=-x$ if and only if $f(x)$ is an odd function. 
\end{Prop}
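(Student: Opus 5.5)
We will use the identity $g^k(x)=g_{\alpha^k}(x)=f(\alpha^k f^{-1}(x))$ from the proof of the first theorem of Section~2. Throughout, $q$ is odd: for even $q$ the exponent $\frac{q-1}{2}$ is not an integer and the statement is vacuous. Take $k=\frac{q-1}{2}$. Since $\alpha$ is primitive, $\alpha^{\frac{q-1}{2}}=-1$. Hence
\[
g^{\frac{q-1}{2}}(x)=f\bigl(-f^{-1}(x)\bigr)
\]
for all $x\in\mathbb{F}_q$. The proposition therefore reduces to showing that $f(-f^{-1}(x))=-x$ for all $x$ if and only if $f(-y)=-f(y)$ for all $y$.

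For the direction ``$f$ odd $\Rightarrow g^{\frac{q-1}{2}}=-\mathrm{id}$'', put $y=f^{-1}(x)$. Oddness gives $f(-y)=-f(y)=-x$, so $g^{\frac{q-1}{2}}(x)=-x$.

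For the converse, assume $f(-f^{-1}(x))=-x$ for every $x\in\mathbb{F}_q$. Recall that $f$ is a permutation, which is the case $d=0$ of the Costas property. So $y=f^{-1}(x)$ runs over all of $\mathbb{F}_q$ as $x$ does. Substituting $x=f(y)$ gives $f(-y)=-f(y)$ for every $y$, so $f$ is odd as a function on $\mathbb{F}_q$.

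If ``odd function'' is meant at the polynomial level, one further step is needed. Write $f(x)=\sum_{i=0}^{q-1} a_i x^i$, a polynomial of degree less than $q$. The identity $f(-y)=-f(y)$ holds for all $y\in\mathbb{F}_q$. The polynomial $f(-x)+f(x)=\sum_i a_i\bigl((-1)^i+1\bigr)x^i$ has degree less than $q$ and vanishes on all of $\mathbb{F}_q$, so it is the zero polynomial. Since $q$ is odd, $2\neq 0$ in $\mathbb{F}_q$, and therefore $a_i=0$ for every even $i$. Thus $f$ contains only odd powers of $x$, and the converse implication holds equally at the polynomial level.

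The only genuinely delicate point is justifying $\alpha^{\frac{q-1}{2}}=-1$ together with the reduction of degree-$<q$ polynomials. Neither is a real obstacle. The first holds because $-1$ is the unique element of order $2$ in the cyclic group $\mathbb{F}_q^*$. The second holds because a nonzero polynomial of degree less than $q$ cannot vanish at all $q$ points of $\mathbb{F}_q$.
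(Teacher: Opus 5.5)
Your proof is correct and takes essentially the same route as the paper. Both arguments reduce the claim to $g^{\frac{q-1}{2}}(x)=f(-f^{-1}(x))$ and pass between this identity and $f(-y)=-f(y)$ by substituting $x=f(y)$, using only that $f$ is a permutation. You also state explicitly the step $\alpha^{\frac{q-1}{2}}=-1$, which the paper uses silently, and add a correct coefficient-level version of oddness; neither changes the core argument.
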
 
\begin{proof} If $g^{\frac{q-1}{2}}(x)=-x$ then $f(-f^{-1}(x))=-x$  which implies $-f^{-1}(x)=f^{-1}(-x)$. Let $x=f(y)$. Then $-f^{-1}(f(y))=f^{-1}(-f(y))$, i.e.\ $f^{-1}(-f(y))=-y$ and hence  $-f(y)=f(-y)$. Thus, $f(x)$ is an odd function.
 
Conversely, if $f(x)$ is odd, then $g(x)=f(\alpha f^{-1}(x))$ is also odd (as a composition of odd functions), and all of its compositions are odd functions. In particular, $g^{\frac{q-1}{2}}(-x)=f(-f^{-1}(x))=-f(f^{-1}(x))=-x$. 
\end{proof}

\section{Construction of Latin Squares}

A \emph{planar polynomial} $f(x) \in \mathbb{F}_q[x]$ is a polynomial such that $f(x+a)-f(x)$ is a permutation polynomial for every $a \in \mathbb{F}_q^*$. Planar polynomials cannot be bijective.  It is known that if $f(x)$ is planar, then the family of Latin squares defined by
$\mathcal{L}^a_{ij}=i+f(j+a)-f(j)$ forms a complete family of pairwise mutually orthogonal Latin squares (see \citep{Coulter}, Theorem 5.3).
Costas polynomials are the multiplicative analogue of planar polynomials.

\begin{Them}
Let $f(x) \in \mathbb{F}_q[x]$ be a Costas polynomial. Then the family of Latin squares defined by
\[
\mathcal{L}^d_{i,j}=i+f(dj)-f(j), \quad d \neq 1
\]
is a complete
family of pairwise mutually orthogonal Latin squares.
\end{Them}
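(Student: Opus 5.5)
The plan is to verify the three defining properties directly: each $\mathcal{L}^d$ is a Latin square, any two distinct members are orthogonal, and there are $q-1$ of them. Throughout, $h_d(x)=f(dx)-f(x)$ as in the earlier Proposition. The Costas hypothesis says $h_d$ is a permutation of $\mathbb{F}_q$ for every $d\neq 1$, and in particular (taking $d=0$ and using $f(0)=0$) $f$ itself is a permutation.

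\emph{Latin property.} Fix $d\neq 1$. In a fixed row $i$, the entries are $i+h_d(j)$ for $j\in\mathbb{F}_q$. These are pairwise distinct because $h_d$ is a permutation and translation by $i$ is a bijection. In a fixed column $j$, the entries are $i+c$ with $c=h_d(j)$ constant, so they are distinct as $i$ varies. Hence each $\mathcal{L}^d$ is a Latin square of order $q$.

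\emph{Orthogonality.} Take $d\neq e$, both different from $1$. I will show that $(i,j)\mapsto(i+h_d(j),\,i+h_e(j))$ is injective on $\mathbb{F}_q\times\mathbb{F}_q$. Subtracting the two coordinates gives $h_d(j)-h_e(j)=f(dj)-f(ej)$, so it suffices to show that $j\mapsto f(dj)-f(ej)$ is a permutation. Once this holds, the ordered pair determines $j$, and then $i$ is recovered from the first coordinate. This is the only step with content, and I split it into two cases.
\begin{itemize}
\item If $e\neq 0$, substitute $j=e^{-1}y$. This gives $f(de^{-1}y)-f(y)=h_{de^{-1}}(y)$, and $de^{-1}\neq 1$ because $d\neq e$. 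So it is a permutation by the Costas property.
\item If $e=0$, then $f(ej)=f(0)=0$ and $d\neq 0$, so $j\mapsto f(dj)$ is a permutation, being the composition of the bijection $j\mapsto dj$ with the permutation $f$.
\end{itemize}

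\emph{Completeness.} The index $d$ runs over $\mathbb{F}_q\setminus\{1\}$, which has $q-1$ elements. The orthogonality step also shows that distinct indices give distinct squares, since two orthogonal Latin squares of order $q\geq 2$ cannot coincide. So the family consists of $q-1$ pairwise mutually orthogonal Latin squares of order $q$, which is the maximum possible number. I expect no real obstacle here: the only point needing care is the degenerate index $e=0$ in the orthogonality check, which is handled by the separate case above.
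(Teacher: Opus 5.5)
Your proof is correct and follows the paper's argument: subtract the two coordinates of the ordered pair, then rescale by $e^{-1}$ to reduce to the permutation $f(de^{-1}y)-f(y)$. You are more careful than the paper in two places: the paper does not check that each $\mathcal{L}^d$ is a Latin square, and its substitution $z=sj$ tacitly requires $s\neq 0$, a gap your separate treatment of $e=0$ closes (equivalently, one could always rescale by whichever of the two indices is nonzero).
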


\begin{proof} Note first that there are $q-1$ of such Latin squares. We compare ordered pairs of elements  in the Latin squares $\mathcal{L}^d $ and $\mathcal{L}^s$ where $d\neq s, d,s\neq 1$. Assume
$$(i+f(dj)-f(j), i+f(sj)-f(j))=(k+f(dt)-f(t), k+f(st)-f(t)), i,j,k,t\in \mathbb{F}_q.$$
Then subtracting the equalities $$i+f(dj)-f(j)=k+f(dt)-f(t)$$ and $$i+f(sj)-f(j)=k+f(st)-f(t)$$ we obtain
$$f(dj)-f(sj)=f(dt)-f(st).$$
Using the substitutions $z=sj, y=st$ we obtain $f(ds^{-1}z)-f(z)=f(ds^{-1}y)-f(y)$. 
The polynomial $f(ds^{-1}x)-f(x)$ is a permutation polynomial, and thus $z=y$ which implies $j=t$ and, consequently, $i=k$. Therefore, $\mathcal{L}^d$ and $\mathcal{L}^s$ are mutually orthogonal.
\end{proof}

The function $g_\alpha (x)=L(\alpha L^{-1}(x))$, which is obtained from known Costas polynomials, is a strong complete mapping and can be used to construct Knut Vik designs and complete families of MOLS (see \citep{Amela}).

 Furthermore, on the multiplicative group $\mathbb{F}_q^*$, the orthomorphism $f(df^{-1}(x))=\mathcal{L}_{x,d}$, where $f(x)$ is a Costas polynomial, defines a Latin square ($d=y$) where rows and columns are denoted by $x,d\in \mathbb{F}_q^*$. This Latin square is equivalent to the known Latin square $K_{x,y}=xy$. On the other hand, if the rows are labeled by $d\in \mathbb{F}_q\setminus \{1\}$ and the columns by $x\in \mathbb{F}_q^*$, then we can also obtain the  Latin square $\mathcal{L}_{d,x}=f(df^{-1}(x))-x$ of order $q-1$ with elements in $\mathbb{F}_q^*$. 

\section*{Code availability}

The implementation used in this paper is publicly available at
\url{https://github.com/samarski/costas}. 
A permanent archived version corresponding to this manuscript is available through Zenodo at DOI: \url{https://doi.org/10.5281/zenodo.20549103}.

\end{document}